\pgfplotsset{width=7cm,compat=1.3}
\newtheorem{theorem}{Theorem}
\newtheorem{lemma}{Lemma}
\newtheorem{remark}{Remark}
\newtheorem{corollary}{Corollary}
\newtheorem{definition}{Definition}
\providecommand{\keywords}[1]{\textbf{\textit{Keywords---}} #1}
\begin{document}
\title{On convergence to the global optimum}
\author{K. Lakshmanan \thanks{Department of Computer Science and Engineering, Indian Institute of Technology (BHU), Varanasi 221005, Email: lakshmanank.cse@iitbhu.ac.in }}
\date{}

\maketitle
\begin{abstract}
	We show that there is no general algorithm which finds a sequence of points with finite-precision converging computably to the global optima of any continuous function.
\end{abstract}

\keywords{Global Optimum, Finite-Precision Numbers, Computable Convergence}


\section{Introduction and Preliminaries}
We consider the problem of finding the global minima of a non-convex continuous function $f:  C \rightarrow \mathbb{R}$, where $ C \subset \mathbb{R}^d$ is a closed, compact subset. Global minima is the point $x^* \in C$ which satisfies the following property: $f(x^*) \leq f(x)$ for all $x \in C$. The function $f$ attains this minimum at least once by extreme value theorem. Our goal is to find one such point. This problem is well-studied with many books written on the subject, see for example \cite{globook}.

We note that our setting  is different from the computability of similar problems studied for example in \cite{pour}. In these works they study computable real numbers and computable real functions. And it is known that if there is an isolated maximum, the point and the maximum value are computable \cite{pour}. In our case, we assume that the function is known by it's oracle. We consider finite-precision numbers and the general case of any continuous function.
Since computers have limited memory, oracles with real-numbers are not a very useful model. Hence we  consider only finite-precision reals.

While it is easy to see that a simple grid search will output a sequence of points converging to the global optima. And for a Lipschitz continuous function it requires exponential number of oracle calls \cite{nest}. 
Our main result is that for any continuous function there
is no algorithm that computes a sequence of points (finite-precision) converging computably to the global minima.

\subsection{Finite Precision Reals}
We now briefly explain what we mean by this.
Consider any real number $x \in \mathbb{R}$. Let $n_0$ be the largest integer such that $r_0 \leq x$. Having chosen $r_0,r_1,\ldots,r_{k-1}$ choose largest positive integer $r_k$ such that
\[ r_0 + \frac{r_1}{10} + \frac{r_2}{10^2} + \ldots \frac{r_k}{10^k} \leq x. \]

This is the decimal expansion of the number. We can check that this expansion is unique. We define precision length to be the number $k$.
Now for finite precision representaion we need to specify the precision length $k$. For any real $x \in \mathbb{R}$, the numbers $r_0,r_1,\ldots,r_k$ is its finite precision representation. Note that $r_i, 0\leq i \leq k$ can be zero.  
For a point $x \in \mathbb{R}^d$, given a precision length $k$ we can have decimal expansions for all it's co-ordinates.
Note that though we give binary representations to the Turing machine, for simplicity we assume precisions denote the decimal precisions.

\begin{remark}\label{gaprem}
	Suppose $r_1,\ldots,r_k$ is the finite precision representation with length $k$ of some real $x$. Let $\bar x$ be the number with decimal expansion $r_1,\ldots,r_k$ as $x$ and $r_{l}=9$ for $l \geq k+1$. And let $\underline x$ be the number with decimal expansion $r_1,\ldots,r_k$ as $x$ and $r_{l}=0$ for $l \geq k+1$. We can see that for all reals in $[\underline x,\bar x]$ we have the same finite length representation. And the length of this interval is $\epsilon = 10^{-k}$. We then say with precision length $k$ we can represent consecutive numbers with gap greater than or equal to $\epsilon$.
\end{remark}

\subsection{Computable Convergence}
Our result is about computable convergence. We define it here.
\begin{definition}
	We say that a sequence of computable numbers ${x_n}$ converges computably to $x^*$, if for all $\epsilon > 0$ there is integer-valued computable function $N(\epsilon)$ such that if $\epsilon > 0$ and $n > N(\epsilon)$ then $|x_n - x^*| < \epsilon$.
\end{definition}

\subsection{The Problem}
We assume there is an oracle for our continuous function $f$. This oracle gives the value $f(x)$ upto any finite-precision for an given finite-precision $x$.
The Turing-machine has access to this function oracle. We give also give a value $\delta > 0$ as input to the Turing machine. It needs to write the any point $x_{o}$ of finite precision length such that $|f(x_{o})-f(x*)| < \delta$ i.e., it should find $\delta-$ approximation of the global optima. We show that this problem is not computable.

Let us assume we have a three-tape Turing machine, one is used for calculation, one is for the giving the finite precision real and the precision required to the function oracle and one has the value returned from the oracle. \cite{sorbook}

\begin{definition}
	Turing machine has a three infinite tapes divided into cells, a reading head which scans one cell of the tape at a time, and a finite set of internal states $Q=\{q_0,q_1,\ldots,q_n\}, \, n \geq 1$. Each cell is either blank or has symbol 1 written on it. In a single step the machine may simultaneously (1) change the from one state to another; (2) change the scanned symbol $s$ to another symbol $s'\in S = \{1,B\}$; (3) move the reading head one cell to the right (R) or left (L). This operation of machine is controlled by a partial map $\Gamma : Q \times S^3 \rightarrow Q \times (S \times \{R,L\})^3$.
\end{definition}

The map $\Gamma$ viewed as a finite set of quintuples is called a Turing program. The interpretation is that if $(q,s_1,s_2,s_3,q',s'_1,X_1,s'_2,X_2,s'_3,X_3) \in \Gamma$, in state $q$, scanning symbols $s_1, s_2, s_3$ changes state to $q'$ and in the tape $i$ input symbol to $s'_i$ and moves to scan one square to the right if $X_i=R$ (or left if $X_i=L$.) in the tape $i$.

\section{Main Theorem}

 Given the function $f$, let the set of global minima be denoted by $G_f$. Now consider $\delta$-approximation to the global minima.  

\begin{lemma}\label{preclem}
	For all $\delta > 0$ there exists a point $x_n^*$ of finite precision length $n$ such that $|f(x^*)-f(x_n^*)| < \delta$. 
\end{lemma}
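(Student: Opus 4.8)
The plan is to reduce the statement to the continuity of $f$ at a global minimizer $x^*$ together with the density of finite-precision points established in Remark \ref{gaprem}. First I would fix $\delta > 0$ and invoke continuity of $f$ at the point $x^* \in C$ where the minimum is attained (which exists by the extreme value theorem): there is an $\eta > 0$ such that $|x - x^*| < \eta$ implies $|f(x) - f(x^*)| < \delta$ for every $x \in C$. This converts the hypothesis on function values into a purely geometric requirement on how close we must approximate $x^*$ in the domain.

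Next I would quantify how finely the finite-precision grid covers $\mathbb{R}^d$. By Remark \ref{gaprem}, precision length $n$ represents reals with a gap of at most $10^{-n}$ in each coordinate, so rounding each of the $d$ coordinates of $x^*$ to precision length $n$ yields a finite-precision point $x_n^*$ satisfying the coordinatewise bound, and hence $|x_n^* - x^*| \leq \sqrt{d}\,10^{-n}$. Choosing $n$ large enough that $\sqrt{d}\,10^{-n} < \eta$ forces $|x_n^* - x^*| < \eta$, and the continuity estimate then gives $|f(x_n^*) - f(x^*)| < \delta$, which is precisely the claim. The choice of $n$ and the triangle-type bound on the total coordinatewise error are routine.

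The step I expect to require the most care is ensuring that the rounded point $x_n^*$ actually lies in the domain $C$, so that $f(x_n^*)$ is defined: when $x^*$ sits on the boundary of $C$, a naive coordinatewise truncation may step outside. To handle this I would instead select, among the finite-precision points of length $n$ lying in $C$, one nearest to $x^*$; since $x^* \in C$ and finite-precision points are dense in $\mathbb{R}^d$, for the box-type domains standard in optimization one can always round \emph{toward the interior} of $C$ and still land within distance $\sqrt{d}\,10^{-n}$ of $x^*$, so the same estimate applies. This boundary case is the only delicate point; everything else follows from continuity and the gap bound of Remark \ref{gaprem}.
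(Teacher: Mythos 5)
Your proof takes essentially the same route as the paper's: use continuity of $f$ (the paper invokes it in the uniform form $|x-y|<\epsilon \Rightarrow |f(x)-f(y)|<\delta$, you use continuity at $x^*$) to convert the $\delta$-tolerance on values into a spatial tolerance, then round $x^*$ to a sufficiently long finite-precision representation via Remark~\ref{gaprem}. If anything, you are more careful than the paper, which silently ignores both the $\sqrt{d}$ factor for points in $\mathbb{R}^d$ and the possibility that the truncation of $x^*$ lands outside the domain $C$.
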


\begin{proof}
	Let $\epsilon > 0$ be such that $|x-y| < \epsilon$ implies $|f(x)-f(y)| < \delta$. Such an $\epsilon > 0$ exists for all $\delta > 0$ because the function $f$ is continuous. Let $n$ be the precision length required to represent numbers with gap $\epsilon/10$ between consecutive numbers (Remark \ref{gaprem}). Then we see for the global minima $x^*$ (like for all other points) it's finite precision representation $x_n^*$ with precision length $n$ is such that $|x_n^*-x^*|<\epsilon$. 
\end{proof}

\begin{definition}
	Let $G'_{\delta,k}$ be the set of points with given finite precision length $k \geq 1$ where the function value is $\delta > 0$ close to the global minima. And $G'_{\delta}$ be the union of all such sets.
\end{definition} 
We consider only finite-precision numbers. 
As there are only finite number of points with precision length $k$, the set $G'_{\delta,k}$ is finite. 
Since we would like an algorithm to converge to a single point, for simplicity we assume the global optima is unique i.e., $G_f$ is a singleton. We can state the main theorem.

\begin{theorem}\label{mainthm}
	There is no sequence of computable points with finite-precision length converging computably to the global minima.
\end{theorem}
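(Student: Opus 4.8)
The strategy is to reduce the impossibility to the non-computability of a single real number that I plant as the unique minimizer. The elementary fact I would lean on is the standard characterization of computable reals: if a sequence $\{x_n\}$ of finite-precision (hence rational, hence computable) points converges computably to a limit $x^*$ -- that is, with a computable modulus $N(\epsilon)$ in the sense of the Definition above -- then $x^*$ is itself a computable real, since for each $k$ the number $x_{N(10^{-k})+1}$ is a finite-precision rational lying within $10^{-k}$ of $x^*$ and can be written down effectively. Consequently, to establish the theorem it suffices to exhibit a continuous function on a compact set whose unique global minimizer is \emph{not} a computable real: for such a function, no computable sequence of finite-precision points can converge computably to $x^*$.

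I would construct such a function directly from the halting problem. Let $\{M_e\}_{e\ge 0}$ be a standard enumeration of Turing machines, let $h_e\in\{0,1\}$ be $1$ if $M_e$ halts on input $e$ and $0$ otherwise, and let $K=\{e:h_e=1\}$, which is c.e.\ but undecidable. Put
\[ x^* \;=\; \sum_{e\ge 0} h_e\,10^{-(e+1)} \;\in\; [0,1], \]
so that the $(e+1)$-st decimal digit of $x^*$ is exactly $h_e$. Since every digit lies in $\{0,1\}$, this is the unambiguous expansion of Section~1.1 (no trailing nines arise), so computability of $x^*$ would decide $K$ and hence $x^*$ is non-computable. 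Now take $C=[0,1]$ and $f(x)=(x-x^*)^2$: this is continuous, attains its minimum value $0$ exactly at $x=x^*$, so $G_f=\{x^*\}$ is the desired unique minimizer, and the oracle simply returns $(x-x^*)^2$ to any requested finite precision on finite-precision inputs -- a legitimate oracle, though not a computable one, which is permitted since we treat arbitrary continuous $f$.

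With this instance fixed, I would finish by contradiction, in the explicit form of a halting-decision procedure. Suppose $\{x_n\}$ were a computable sequence of finite-precision points converging computably to $x^*$ with computable modulus $N$. To decide $h_e$, having already decided $h_0,\dots,h_{e-1}$ (so that the common prefix $P=\sum_{i<e}h_i10^{-(i+1)}$ is known), compute $m=N(10^{-(e+2)})$ and evaluate the rational $x_{m+1}$. Because $|x_{m+1}-x^*|<10^{-(e+2)}$ while the tail $\sum_{i>e}h_i10^{-(i+1)}$ is at most $\tfrac19 10^{-(e+1)}$, the admissible position of $x^*$ for $h_e=0$ lies in $[P,\,P+\tfrac19 10^{-(e+1)}]$ and for $h_e=1$ in $[P+10^{-(e+1)},\,P+\tfrac{10}{9}10^{-(e+1)}]$; testing $x_{m+1}$ against $P+\tfrac12 10^{-(e+1)}$ therefore returns $h_e$ outright. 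Running this for $e=0,1,2,\dots$ decides $K$, contradicting its undecidability, so no such sequence exists -- which is the theorem. It is worth contrasting this with Lemma~\ref{preclem}: approximating the minimum \emph{value} to any $\delta$ is always possible at finite precision, whereas approximating the minimizing \emph{point} with a computable rate is not.

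The step I expect to be the main obstacle -- and would write most carefully -- is fixing the exact sense of ``computable'' at the oracle interface and making the digit extraction robust near rounding boundaries. The separation above (digits in $\{0,1\}$, accuracy $10^{-(e+2)}$ against a tail below $\tfrac19 10^{-(e+1)}$, and the gap-$\epsilon$ representation of Remark~\ref{gaprem}) is designed to give a clean gap, but the inequalities must be verified. Just as importantly, I must be explicit that the hypothetical sequence and its modulus are computable objects in the ordinary, unrelativized sense, so that the extraction genuinely constitutes an algorithm deciding $K$; this is precisely where the impossibility bites, since the non-computable oracle for $f$ is never consulted by the sequence and so cannot be used to smuggle the digits of $x^*$ back in.
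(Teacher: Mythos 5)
There is a genuine gap, and it lies in the model, not in your arithmetic. In the paper's setting the Turing machine \emph{has access to the oracle for $f$} (Section 1.3), and the theorem, read together with the abstract, asserts that no oracle algorithm produces, for every continuous $f$, a sequence converging computably to the minimizer. Your proof deliberately strips the oracle away: you stipulate that the hypothetical sequence and its modulus are computable ``in the ordinary, unrelativized sense'' and that the oracle ``is never consulted by the sequence.'' But then the statement you prove is a different and much weaker one, and your planted hard instance is in fact \emph{easy} in the paper's model: for $f(x)=(x-x^*)^2$ on $[0,1]$, a machine that queries the oracle at the single point $0$ obtains $f(0)=(x^*)^2$ to arbitrary precision, and since square roots are computable from rational approximations, it can output finite-precision points within $10^{-k}$ of $x^*=\sqrt{f(0)}$ with an explicit computable modulus --- a computably converging sequence in exactly the paper's sense, for your $f$, even though $x^*$ is not a computable real. (You half-notice the tension when you admit the oracle is ``not a computable one'' yet ``permitted''; permitting it means the algorithm may use it, and using it trivializes your instance.) Conversely, once oracle access is forbidden, no halting-problem machinery is needed at all: an algorithm receiving no information about $f$ must emit one fixed sequence, which already fails for one of two quadratics $(x-a)^2$, $(x-b)^2$ with $a\neq b$. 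Your internal steps are sound --- a computable sequence with computable modulus does have a computable limit, and your gap estimate ($|x_{m+1}-x^*|<10^{-(e+2)}$ against a tail at most $\tfrac{1}{9}10^{-(e+1)}$) does extract $h_e$ --- but what they establish is ``some continuous function has a non-computable minimizer,'' which neither implies nor engages the theorem about algorithms that may evaluate $f$.

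The paper's own proof, informal though it is, places the obstruction where it actually lives: in the oracle queries. It reduces certifying that a candidate $x_k$ is $\delta$-optimal to verifying $h^{\delta}_{x_k}\equiv 0$, and argues that no finite number of evaluations can certify this, since any finite query transcript is consistent with a continuous function dipping lower at some unqueried finite-precision point. A repair of your approach must therefore diagonalize against oracle machines rather than fix $f$ in advance: given a candidate algorithm with claimed computable modulus $N(\epsilon)$, run it, answer its finitely many oracle queries consistently with some partial data, wait until stage $N(\epsilon)$ when it has committed its outputs to an $\epsilon$-interval, and only then extend the partial data to a continuous $f$ whose unique global minimizer lies far outside that interval, so the committed modulus is violated. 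That adversary construction --- choosing $f$ \emph{after} seeing the algorithm's behavior, rather than planting a single non-computable minimizer beforehand --- is the missing idea.
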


\begin{proof}
	Let assume we have a sequence of $\{\delta_k\}$ which goes to zero. Let $x_k$ be any point of
	some finite precision length $n_k$ such that $|f(x^*)-f(x_k)| < \delta_k$. Such a point exists by Lemma \ref{preclem} i.e., the set $G'_{\delta,n_k}$ is non-empty for $\delta > 0$. This length $n_k$ can increase with $k$.
	
	We consider an equivalent problem. We define $h_{x_k}^{\delta}(x) := \max \{0,f(x_k)-f(x)-\delta\}$. Since our objective function $f$ is continuous, $h_{x_k}^{\delta}(\cdot)$ is also continuous. 
	This function is identically zero if and only if $|f(x_k) - f(x)| < \delta$, for all $x$. This happens only if $x_k$ is a $\delta-$ approximation to the optimum. Note that $x_k$ and $x$ are represented with some finite precision. 
	
	Thus the set of all points with finite precision that are $\delta$ close to the global minima $G'_{\delta,k}$, is also the set of all points $x_k$ where the function $h_{x_k}^{\delta}(\cdot)$ is identically zero.
	That is to find if a point $x_k$ belongs to $G'_{\delta,n_k}$ to is same as checking if whether $h_{x_k}^{\delta}(x) \equiv 0 $ (function is identically zero). But this cannot be checked for a particular $x_k$ unless it is checked for all $x$ of any finite precision length. As there are infinitely many such points, there is no Turing machine which can compute (halt) if a function is zero at infinitely many points.
	Hence finding a $\delta-$ approximation $\{x_k\}$ with precision length $n_k$ to the optimum is not computable 
	or in other words we have shown the theorem.
\end{proof}

\begin{corollary}
	The problem of approximating the global minima of a continuous function by an arbitrary $\delta > 0$ is not computable.
\end{corollary}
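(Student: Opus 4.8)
The plan is to derive the corollary from Theorem~\ref{mainthm} by a reduction argument: a machine that solves the $\delta$-approximation problem for arbitrary $\delta$ would manufacture exactly the computably convergent sequence that Theorem~\ref{mainthm} forbids. Concretely, I would argue by contradiction. Suppose the problem were computable; that is, suppose there is a Turing machine $M$ which, on input of any $\delta > 0$ and with access to the oracle for $f$, halts and writes a finite-precision point $x_\delta$ satisfying $|f(x_\delta) - f(x^*)| < \delta$.

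From such a machine I would build a candidate sequence. Fix a computable sequence $\delta_k \downarrow 0$, for instance $\delta_k = 10^{-k}$, and set $x_k := M(\delta_k)$. Each $x_k$ is then a computable point of finite precision length, and by construction $|f(x_k) - f(x^*)| < \delta_k \to 0$, so the \emph{function values} converge to $f(x^*)$. The remaining task is to promote this convergence of values into \emph{computable} convergence of the points $x_k$ to the unique global minimiser $x^*$, since such a sequence is precisely what Theorem~\ref{mainthm} rules out.

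The bridge from value-convergence to point-convergence is where I would invoke the standing hypotheses. Because $C$ is compact, $f$ is continuous, and $G_f = \{x^*\}$ is a singleton, the quantity
\[ \omega(\epsilon) := \inf\{\, f(x) - f(x^*) : x \in C,\ |x - x^*| \geq \epsilon \,\} \]
is attained and strictly positive for every $\epsilon > 0$; hence $f(x_k) - f(x^*) < \omega(\epsilon)$ forces $|x_k - x^*| < \epsilon$. Choosing any $k$ large enough that $\delta_k < \omega(\epsilon)$ then delivers the required accuracy, and collecting the least such index defines an integer-valued modulus $N(\epsilon)$ of the shape demanded by the definition of computable convergence.

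I expect the principal obstacle to be exactly the \emph{computability} of this modulus: exhibiting $N(\epsilon)$ as a computable function of $\epsilon$ requires a computable positive lower bound on $\omega(\epsilon)$, which does not follow automatically from continuity together with uniqueness of the minimiser. I would therefore either extract such a bound from the oracle and the structural assumptions on $f$, or—more in line with the preceding proof—argue that any effective procedure producing $N(\epsilon)$ would again amount to the exhaustive verification over all finite-precision points shown to be impossible in Theorem~\ref{mainthm}. Once the computable modulus is secured, the sequence $\{x_k\}$ converges computably to $x^*$, contradicting Theorem~\ref{mainthm}, and the corollary follows.
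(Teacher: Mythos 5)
Your reduction runs in the opposite direction from the paper's. In the paper the corollary needs no separate argument: the proof of Theorem~\ref{mainthm} establishes the non-computability of the $\delta$-approximation problem as its central step (its closing sentence is precisely that finding a $\delta$-approximation to the optimum is not computable), and the corollary is a restatement of that fact. You instead take the \emph{statement} of Theorem~\ref{mainthm} as primitive and try to derive the corollary from it by contradiction, which forces you to convert a machine $M$ solving the $\delta$-approximation problem into a sequence converging \emph{computably} to $x^*$.

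That conversion is exactly where your argument has a genuine gap, and you name it yourself. Compactness, continuity, and uniqueness of the minimiser do give $\omega(\epsilon) > 0$ and hence plain convergence $x_k \to x^*$, but computable convergence demands an integer-valued computable $N(\epsilon)$, i.e.\ in your construction a computable positive lower bound on $\omega(\epsilon)$. No such bound is available: with only oracle access to $f$, any finite set of queries is consistent with $f$ dipping to within $\delta_k$ of $f(x^*)$ arbitrarily far from $x^*$, so nothing you can compute bounds $\omega(\epsilon)$ away from zero; uniqueness of the minimiser is a hypothesis about $f$ that the machine can neither certify nor quantify. Your first proposed repair (extracting the bound from the oracle and the structural assumptions) therefore cannot work. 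Your second proposed repair is self-defeating: showing that no effective procedure can produce $N(\epsilon)$ does not complete the reduction --- it shows precisely that your constructed sequence fails to be computably convergent, so no contradiction with Theorem~\ref{mainthm} is reached; at best it amounts to abandoning the reduction and re-running the theorem's own argument. The clean route is to note that the corollary need not be deduced from the theorem's statement at all, since the theorem's proof already establishes it directly.
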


\begin{corollary}
	The problem of checking whether local minima $z$ is global is not computable as this involves checking whether $h_z^{\delta}(\cdot)$ is identically zero.
\end{corollary}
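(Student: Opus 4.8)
The plan is to reduce the decision problem ``is the local minimum $z$ global?'' to the identical-vanishing test for the auxiliary function $h_z^{\delta}$, and then to invoke the obstruction already isolated in the proof of Theorem~\ref{mainthm}. First I would make the characterisation explicit by unwinding the definition $h_z^{\delta}(x) = \max\{0, f(z) - f(x) - \delta\}$. This expression vanishes at a particular $x$ exactly when $f(z) - f(x) - \delta \le 0$, so $h_z^{\delta}(\cdot)$ is identically zero precisely when $f(z) - \delta \le f(x)$ holds for every $x$, i.e.\ when $f(z) \le f(x^*) + \delta$. Since $z$ is already a (local) minimiser we always have $f(z) \ge f(x^*)$, so this says that $h_z^{\delta} \equiv 0$ if and only if $z$ is a $\delta$-approximate global minimum. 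Letting $\delta \downarrow 0$ (equivalently, taking $h_z^{0}$), the point $z$ is genuinely global if and only if $h_z^{0}(\cdot) \equiv 0$, which is exactly the equivalence announced in the corollary statement.

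Next I would argue by reduction. A Turing machine that decides globality of $z$ from the oracle for $f$ would, through the equivalence above, also decide whether $h_z^{\delta}(\cdot)$ is identically zero on the set of finite-precision points of $C$. But the proof of Theorem~\ref{mainthm} has already shown that no Turing machine can certify that such a function vanishes at \emph{all} of these points: there are infinitely many finite-precision arguments, and confirming the value at each would require infinitely many oracle queries, so the machine never halts with a definitive answer. Hence the globality test inherits this non-computability, and the corollary follows.

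The main obstacle, and the point requiring care, is the passage between the exact question ($\delta = 0$) and the approximate machinery ($\delta > 0$) on which Theorem~\ref{mainthm} is phrased. When a local minimum $z$ fails to be global it does so by a strictly positive gap $f(z) - f(x^*) > 0$, and one must resist the temptation to conclude that a single fixed $\delta$ below this gap settles the matter: the deciding machine is given neither $x^*$ nor the gap, so it cannot select such a $\delta$ in advance, and it is exactly this lack of a finite certificate for the quantifier ``for all $x$'' that transports the undecidability of Theorem~\ref{mainthm} to the present setting. I would therefore keep the argument at the level of the identical-vanishing test for $h_z^{\delta}$ rather than pinning it to any particular numerical threshold.
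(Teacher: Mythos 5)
Your proposal is correct and takes essentially the same approach as the paper: the paper offers no separate proof beyond the reduction stated in the corollary itself, and your argument---deciding globality of $z$ amounts to testing whether $h_z^{\delta}(\cdot)$ is identically zero, which the proof of Theorem~\ref{mainthm} rules out because it would require verifying the function value at infinitely many finite-precision points---is exactly that reduction, spelled out. Your additional care about the passage between the approximate test ($\delta>0$) and the exact question ($\delta=0$), including the observation that a deciding machine cannot know the gap $f(z)-f(x^*)$ in advance, is a useful elaboration of a point the paper leaves entirely implicit.
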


\begin{remark}
	As we can find an ball of some radius $\omega$ where the continuous function $h_z^{\delta}(\cdot)$ is non-zero around a local optima. The same proof does not hold for converging to local optima as we can check if $h_z^{\delta}(\cdot)$ is identically zero in steps of size less than $\omega$.
\end{remark}

\begin{remark}
	Even in presence of higher order oracles, i.,e orcales which give derivates of the function, the same problem of checking if $h_z^{\delta}(\cdot)$ is identically zero remains. Hence global optima even in presence of these higher-order function oracle is not computable.
\end{remark}

\section{Conclusion}
We have given a simple proof that there is no algorithm which finds a sequence (of finite-precision) points converging computably to global optima of any continuous function $f$. 

\bibliographystyle{plain}

\begin{thebibliography}{5}
	\bibitem{globook} R. Horst and H. Tuy., Global Optimization: Deterministic Approaches, Springer-Verlag (1996).
	
	\bibitem{sorbook} R.I. Soare., Turing Computability: Theory and Applications, Springer-Verlag (2016).
	
	\bibitem{nest} Y. Nesterov., Introductory Lectures on Convex Optimization A Basic Course, Kluwer Academic Publishers (2003).
	
	\bibitem{pour} M. Pour-El and J. Richards., Computability in analysis and physics, Springer, Heidelberg (1989).
\end{thebibliography}

\end{document}